\newtheorem{theorem}{Theorem}
\newtheorem{proposition}[theorem]{Proposition}%
\newtheorem{corollary}[theorem]{Corollary}
\def\ma{\mathfrak{A}}
\def\mb{\mathfrak{B}}
\def\mc{\mathfrak{C}}
\def\mm{\mathfrak{M}}
\def\mn{\mathfrak{N}}
\def\SH{\mbox{SH}}
\def\Th{\mbox{Th}}
\def\dom{\mbox{dom}}
\def\Part{\mbox{\rm Part}}
\begin{document}

\title{On highly equivalent non-isomorphic countable models of arithmetic and set theory\thanks{The second author would like to thank  the Academy of Finland, grant no: 322795, and funding from the European Research Council (ERC) under the
European Union’s Horizon 2020 research and innovation programme (grant agreement No
101020762). }}
\author{Tapani Hyttinen\\
University of Helsinki, Finland
\and 
Jouko V\"a\"an\"anen\\
University of Helsinki, Finland\\
University of Amsterdam, The Netherlands}
\maketitle
\def\PA{\mbox{PA}}
\def\ZFC{\mbox{ZFC}}
\def\EF{\mbox{EF}}

\begin{abstract}
It is well-known that the first order Peano axioms $\PA$ have a continuum of non-isomorphic countable models.
The question, how close to being isomorphic such countable models can be, seems to be less investigated.
A measure of closeness to isomorphism of countable models is the length of back-and-forth sequences that can be established between them. We show that for every countable ordinal $\alpha$ there are countable non-isomorphic models of $\PA$ with a back-and-forth sequence of length $\alpha$ between them. This implies that the Scott height (or rank) of such models is bigger than $\alpha$. We also prove the same result for models of $\ZFC$.
\end{abstract}

It is well-known that there are continuum many non-isomorphic countable models of $\PA$ i.e. of Peano's axioms for the arithmetic of natural numbers, all elementarily equivalent to the standard model. This can be seen as evidence of the richness of the class of countable models of $\PA$. In this paper we prove a different kind of richness result for arithmetic. We show that for every countable $\alpha$ there are countable non-isomorphic models of $\PA$ that are not only elementarily equivalent (to the standard model) but are even $\alpha$-equivalent to each other. This means that the models cannot be distinguished from each other by any sentence of $L_{\infty\omega}$ of quantifier-rank $<\alpha$. So it is not only that there are many countable non-standard models of $\PA$ but there are also many countable models of $\PA$ which are very difficult to distinguish from each other. Our results, based on the methodology of \cite{MR503792}, hold for more general theories than $\PA$, e.g. for $\ZFC$, so this phenomenon is not a question of having missed some axioms from $\PA$. It is feature of all first order theories which satisfy some specific rather weak assumptions, reminiscent of $\PA$. Thus in our result about $\ZFC$ (Theorem~\ref{zfc}) there is nothing specific about $\ZFC$. As the proof shows, the result is more general but we formulate it for $\ZFC$ for simplicity. For a recent results and survey of Scott heights we refer to \cite{MR4402053}.

The so-called \emph{internal categoricity} of $\PA$ (respectively of $\ZFC$) means the following uniqueness fact: If two models of $\PA$, with the same domain but disjoint vocabularies, satisfy the Induction Schema even for formulas where the non-logical symbols from the other model are allowed to occur, the two models are isomorphic (up to a change of vocabulary). We refer to \cite{https://doi.org/10.48550/arxiv.2204.13754} for an overview and history of this concept. In the case that the models are countable and non-isomorphic but $\alpha$-equivalent, we may conclude informally that even $\alpha$-equivalence is not enough to communicate to the models what the other model is like. The question to what extent can two  models of arithmetic (or two people each with their own model of arithmetic) communicate to each other what their model is, was raised in \cite{10.2307/23350653} and discussed extensively in the philosophical literature  afterwords (see \cite{https://doi.org/10.48550/arxiv.2204.13754} for details).

Two models $\ma$ and $\mb$ of the same vocabulary are called \emph{$\alpha$-equivalent} if $\ma\equiv^\alpha_{\infty\omega}\mb$ i.e. if they satisfy the same sentences of $L_{\infty\omega}$ of quantifier-rank $<\alpha$. For any countable models $\ma$ and $\mb$ in a countable vocabulary there is a countable ordinal $\alpha$ such $\alpha$-equivalence of $\ma$ and $\mb$ implies their isomorphism \cite{MR0200133}. Thus countable ordinals provide a perfect method to estimate how close two countable models are to being isomorphic. The bigger the $\alpha$ the closer they are. Every pair of countable models is measured in this way by some $\alpha$.

We discussed above the question how unique are the natural numbers. The concept of $\alpha$-equivalence provides a method to evaluate this question. If we have two countable non-isomorphic but $\alpha$-equivalent models of our theory of the natural numbers, we know that there is nothing we can add to our theory that is expressible in  $L_{\infty\omega}$ by a sentence of quantifier-rank $<\alpha$ that would distinguish the models from each other. Of course, neither of the models can be in such a situation with the standard model because the standard model can be characterized up to isomorphism among models of $\PA$ by means of the sentence
$$\forall x\bigvee_n\exists x_1\ldots x_n\forall y(y<x\to (y=x_1\vee\ldots\vee y=x_n))$$
of quantifier-rank $\omega+1$.

There is a nice characterization of $\alpha$-equivalence due to Carol Karp. To formulate it, let $\Part(\ma,\mb)$ denote the set of all finite partial isomorphisms $\ma\to\mb$. We use $A,B,$ etc to denote the domains of structures $\ma,\mb,$ etc.

\begin{proposition}[\cite{MR0209132}]\label{baf}
$\ma\equiv^\alpha_{\infty\omega}\mb$ if and only if  there are sets $P_\beta$, $\beta\le\alpha$, such that:
\begin{enumerate}
\item $\emptyset \ne P_{\alpha} \subseteq . . . \subseteq P_0 \subseteq \Part(\ma, \mb)$. 
\item $\forall f \in  P_{\beta +1}\forall a \in  A\exists b \in  B\exists g \in  P_\beta (f \cup \{(a, b)\} \subseteq g)$ for $\beta  <{\alpha}$. 
\item $\forall f \in  P_{\beta +1}\forall b \in  B\exists a \in  A\exists g \in  P_\beta (f \cup \{(a, b)\} \subseteq g)$ for $\beta  <{\alpha}$. 

\end{enumerate}We call $(P_\beta)_{\beta\le\alpha}$ a \emph{back-and-forth sequence} for $(\ma,\mb)$.
\end{proposition}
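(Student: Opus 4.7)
This is a classical Karp-style back-and-forth characterization, which I would prove by handling the two implications separately: the back-and-forth-implies-equivalence direction by transfinite induction on $\beta\le\alpha$, and the equivalence-implies-back-and-forth direction by an explicit definition of the sets $P_\beta$ as partial isomorphisms respecting $L^\beta_{\infty\omega}$-equivalence of expanded structures. In each direction the forth and back clauses are symmetric, so only one side needs to be carried out in detail.

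\textbf{From back-and-forth to equivalence.} Assume a sequence $(P_\beta)_{\beta\le\alpha}$ satisfying (1)--(3). I would prove by induction on $\beta\le\alpha$ the strengthened claim: whenever $f\in P_\beta$, $\bar a$ enumerates $\dom(f)$, and $\varphi(\bar x)$ is an $L_{\infty\omega}$-formula of quantifier-rank $\le\beta$, then $\ma\models\varphi(\bar a)\iff\mb\models\varphi(f(\bar a))$. Atomic $\varphi$ is handled because $f$ is a partial isomorphism; negations and infinitary conjunctions and disjunctions follow from the induction hypothesis together with the descending-chain condition $P_\beta\subseteq P_\gamma$ for $\gamma<\beta$. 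A formula $\varphi=\exists y\,\psi(\bar x,y)$ of rank $\le\beta+1$ is treated as follows: a witness $a\in A$ for $\varphi(\bar a)$ in $\ma$ is transferred via clause (2) to some $b\in B$ and $g\in P_\beta$ with $f\cup\{(a,b)\}\subseteq g$, after which the induction hypothesis applied to $\psi$ (of rank $\le\beta$) yields $\mb\models\psi(f(\bar a),b)$; the converse uses clause (3) symmetrically. Taking any $f\in P_\alpha$, which exists by (1), and letting $\varphi$ be a sentence, gives $\ma\equiv^\alpha_{\infty\omega}\mb$.

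\textbf{From equivalence to back-and-forth.} For the converse I would set, for each $\beta\le\alpha$,
\[
P_\beta=\{f\in\Part(\ma,\mb)\mid (\ma,\bar a)\equiv^\beta_{\infty\omega}(\mb,f(\bar a)),\ \bar a\text{ enumerating }\dom(f)\}
\]
and verify (1)--(3). The descending inclusion in (1) is immediate from the fact that $\equiv^\gamma_{\infty\omega}$ refines $\equiv^\beta_{\infty\omega}$ when $\beta\le\gamma$, and the empty map lies in $P_\alpha$ by hypothesis. For (2), given $f\in P_{\beta+1}$ and $a\in A$, I must produce $b\in B$ with $(\ma,\bar a,a)\equiv^\beta_{\infty\omega}(\mb,f(\bar a),b)$, after which $f\cup\{(a,b)\}$ itself lies in $P_\beta$; clause (3) is symmetric.

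\textbf{Main obstacle.} The delicate step is producing the element $b$ in the forth clause of the backward direction. Here one invokes the fact that in a countable vocabulary, the $L_{\infty\omega}$-formulas of quantifier-rank $\le\beta$ in finitely many free variables form, up to logical equivalence, a set. Consequently the complete $L^\beta_{\infty\omega}$-type of $(\ma,\bar a,a)$ is equivalent to a single Scott-style formula $\chi_a(\bar x,y)$ of rank $\le\beta$. Then $\exists y\,\chi_a(\bar x,y)$ has rank $\le\beta+1$ and is satisfied by $\bar a$ in $\ma$, so by $f\in P_{\beta+1}$ it is satisfied by $f(\bar a)$ in $\mb$, and any witness $b$ has the required $\beta$-type. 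This bounded-types fact is the only place where the infinitary logic is used in an essential way, and it is the step I would write out with the most care.
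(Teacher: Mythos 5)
The paper offers no proof of this proposition at all --- it is quoted as a known result of Karp \cite{MR0209132} --- so there is no argument of the authors' to compare against; your proof is the standard one and is correct, with the two essential points (the formula-induction using clauses (2)--(3) for the quantifier step in one direction, and the set-many-formulas-up-to-equivalence fact that lets you package the rank-$\le\beta$ type of $(\ma,\bar a,a)$ into a single formula $\chi_a$ for the forth clause in the other) both correctly identified and placed. The one thing to pin down in a written version is the quantifier-rank convention: you work with ``rank $\le\beta$'' throughout, whereas the paper's definition of $\equiv^\alpha_{\infty\omega}$ says ``rank $<\alpha$''; your convention is the one under which the indexing of the $P_\beta$ lines up exactly (so that $\exists y\,\chi_a$ has rank $\le\beta+1$ and is transferred by membership in $P_{\beta+1}$), and the statement should be read accordingly.
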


Two countable models $\ma$ and $\mb $ are isomorphic if and only if player II has a winning strategy in the 
Ehrenfeucht-Fra\" \i ss\'e game $\EF_\omega(\ma,\mb)$ of length $\omega$. 
We refer to \cite{MR2768176} for details about $\EF_\omega(\ma,\mb)$ and its transfinite versions.
By
\cite{MR1111753}
there are for any unstable countable $T$, any $\kappa>\omega$ with $\kappa^{<\kappa}=\kappa$, and any $\alpha<\kappa$ models $\ma$ and $\mb$ of $T$ such that $\ma$ and $\mb$ are non-isomorphic but player II has a winning strategy in the transfinite Ehrenfeucht-Fra\" \i ss\'e game of length $\alpha$ between $\ma$ and $\mb$. Even stronger results hold for uncountable models, see \cite{MR1111753} for details. Since the theories we deal with in this paper, e.g. $\PA$ and $\ZFC$, are manifestly unstable,   the existence of non-isomorphic models which are very hard to distinguish from each other (the results of \cite{MR1111753} give much more than $\alpha$-equivalence for all $\alpha<\omega_1$) follows from \cite{MR899084}, if the models are allowed to be uncountable. With reference to this, we emphasise that models in the current paper are countable. Note that the theory $\mbox{DLO}$ of dense linear order without endpoints is unstable but $\aleph_0$-categorical.

Throughout this paper $T$ is a first order theory  in a countable vocabulary containing a binary predicate $\{<\}$. Our main result concerns first order theories $T$ which satisfy the following assumptions:
\begin{description}
\item [A1] The well-ordering schema $$(\forall\vec{y})[(\exists x)\phi(x, \vec{y}) \to (\exists x_0)
(\phi(x_0, \vec{y}) \wedge (\forall x < x_0)\neg\phi(x,\vec{y}))],$$ hence $T$ has definable Skolem-functions.
\item [A2] Every non-minimal element has a predecessor.
\item [A3] There is no largest element. 

\end{description}

Canonical examples of such  a theory is Peano arithmetic $\PA$ and its expansions to richer vocabularies.

For any model $\mm$ of $T$ and $A\subseteq M$ we use $\SH(A)$ to denote the Skolem Hull of $A$ in $\mm$.

\begin{theorem}[\cite{MR503792}]\label{shelah}
If $T$ satisfies A1-A3, then $T$ has a type $p$ such that: 

\begin{enumerate}

\item   
For every $\phi(x, \vec{y})$ there is a finite $p_\phi\subseteq p$ such that if $$\psi^p_\phi(\vec{y}) = (\forall x_1)(\exists x >x_1)[ \bigwedge p_\phi\wedge\phi(x,\vec{y})],$$ then for any model $\mm$ of $T$ and $A\subseteq M$, 
the condition $$\mm\models \psi^p_\phi[\vec{c}]\iff\phi(x,\vec{c}) \in p(A),$$
where  $\vec{c} \in  A$, defines a consistent type $p(A)$ over $A$. Denote $\Psi_p=\{\psi^p_\phi(\vec{y}) : \phi\in L \}$. We say that $\Psi_p$ \emph{defines} $p(A)$.

\item If $\mm\preceq_{end} \mn$, $a\in N\setminus M$ realizes $p(M)$, and $\mn=\SH(M\cup\{a\})$, then $\mn$ is an almost minimal end-extension of $\mm$ i.e. $\mn$ is an end-extension of $\mm$ and there is no $\mn'$ such that $\mm\prec_{end}\mn'\prec_{end} \mn$. (Here $\mm\prec_{end}\mn$ means that $\mn$ is a proper elementary end-extension of $\mm$, and $\mm\preceq_{end}\mn$ means that $\mn$ is just an elementary end-extension of $\mm$.)
\end{enumerate}
\end{theorem}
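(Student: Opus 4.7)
The plan is to construct $p$ as a complete $1$-type (in the variable $x$) by a recursive construction generalising Gaifman's construction of a minimal unbounded definable type over the standard model of $\PA$ to the abstract setting of A1--A3. I would enumerate all $L$-formulas $\phi_0(x,\vec{y}),\phi_1(x,\vec{y}),\ldots$ and all closed $L$-terms $c_0,c_1,\ldots$, and then inductively build an increasing chain $p_{(0)}\subseteq p_{(1)}\subseteq\ldots$ of finite sets of $L$-formulas in the single variable $x$, setting $p=\bigcup_n p_{(n)}$. At stage $n$ I would place $c_n<x$ into $p_{(n)}$; by A3 this forces $p(A)$ to be cofinal in every $A$-containing model of $T$.

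At stage $n+1$ I would inspect $\phi_n(x,\vec{y})$ and enlarge $p_{(n)}$ by finitely many carefully chosen formulas so that the sentence
$$\psi^p_{\phi_n}(\vec{y})=(\forall x_1)(\exists x>x_1)\bigl[\bigwedge p_{(n+1)}\wedge\phi_n(x,\vec{y})\bigr]$$
uniformly and correctly controls, for every $\mm\models T$ and every $\vec{c}\in M$, whether $\phi_n(x,\vec{c})$ should land in $p(A)$. The well-ordering schema A1 is essential here: it supplies \emph{definable least witnesses} of each cofinal pattern, so the truth value of $\psi^p_{\phi_n}[\vec{c}]$ can be read off a \emph{finite} piece of $p$ rather than requiring the whole type. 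I would then set $p_\phi:=p_{(n+1)}$ for $\phi=\phi_n$.

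Given this construction, part (1) is mostly bookkeeping. The biconditional $\mm\models\psi^p_\phi[\vec{c}]\iff\phi(x,\vec{c})\in p(A)$ holds by definition of $p(A)$. Consistency of $p(A)$ reduces to showing that, for any finite $\{\phi_{n_1}(x,\vec{c}_1),\ldots,\phi_{n_k}(x,\vec{c}_k)\}\subseteq p(A)$, the cofinal witness sets supplied by the individual $\psi^p_{\phi_{n_i}}[\vec{c}_i]$ admit a common element above any prescribed bound. This is arranged by demanding at each stage, again using A1 to select least witnesses, that successive $\psi^p$-conditions cohere along a single definable cofinal branch.

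The delicate step, and where I expect the main obstacle to lie, is part (2). That $\mn$ is an end-extension of $\mm$ follows from cofinality of $p(M)$ together with A2: any hypothetical $b\in N$ with $b<m\in M$ would, via the Skolem term representing $b$ in terms of $a$, force $a$ to satisfy a bounded $M$-definable condition, contradicting cofinality. For almost minimality, suppose $\mm\prec_{end}\mn'\prec_{end}\mn$, pick $b\in N'\setminus M$, and write $b=t(a,\vec{m})$ for a Skolem term $t$ and $\vec{m}\in M$. One must show $a\in\SH(M\cup\{b\})$, giving $\mn'=\mn$. The construction of $p$ must therefore also secure, for each Skolem term $t(x,\vec{y})$, a formula $\chi_t\in p$ enforcing the dichotomy that either $t(a,\vec{m})\in M$ (excluded by $b\notin M$) or the map $x\mapsto t(x,\vec{m})$ is injective near $a$ in an $M$-definable way that lets $a$ be recovered as a Skolem function of $b$ and $\vec{m}$. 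Weaving this second family of requirements into the same recursion as the definability requirements of part (1), while preserving cofinality, is the heart of the argument: A1 supplies the canonical witnesses, A2 underwrites the stepping-down used inside definable intervals, and A3 keeps the whole type pushable arbitrarily high.
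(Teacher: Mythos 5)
The paper does not actually prove Theorem~\ref{shelah}: it is quoted from Shelah's paper on end extensions (the bracketed citation in the theorem header), so there is no in-paper argument to compare yours against. Judged on its own terms, your proposal is a plan rather than a proof. The two places where you write ``carefully chosen formulas'' and ``the heart of the argument'' are precisely where the entire mathematical content of the theorem lives, and neither is carried out. Concretely, for part (1) you must produce, for each formula $\phi(x,\vec{y})$ with free parameter variables, a \emph{finite} $p_\phi$ such that $T$ \emph{proves} that for every $\vec{y}$ at most one of $\bigwedge p_\phi\wedge\phi(x,\vec{y})$ and $\bigwedge p_\phi\wedge\neg\phi(x,\vec{y})$ is cofinal: a single finite refinement has to decide $\phi(x,\vec{c})$ uniformly in all parameter tuples $\vec{c}$ in all models of $T$ at once, and this uniform decision is also exactly what makes $p(A)$ consistent (the rejected alternatives are then bounded, so finitely many accepted formulas have a common realization above any bound). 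In $\PA$ this refinement step is Gaifman's construction and leans on coding and the pigeonhole principle; A1--A3 supply neither, which is why Shelah's argument is nontrivial and does not reduce to ``bookkeeping.'' The same applies to the term dichotomy you postulate for part (2): that each Skolem term $t(x,\vec{y})$ can be forced, by a finite addition to $p$, to be provably and uniformly in $\vec{y}$ either eventually constant or invertible on the set cut out by $p$ is an assertion that needs an argument, not a requirement one can simply weave into the recursion.

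One step you do spell out is incorrect as stated. To show that $\mn$ end-extends $\mm$ you argue that $b=t(a,\vec{m})<m\in M$ would ``force $a$ to satisfy a bounded $M$-definable condition, contradicting cofinality.'' It does not: the condition $t(x,\vec{m})<m$ can perfectly well be cofinal in $M$ (take $t$ constant, or any term taking small values cofinally often), so cofinality of $\operatorname{tp}(a/M)$ alone rules out nothing. Excluding new elements below $M$ requires precisely the term-control property of $p$ (if $t(x,\vec{m})\le m$ holds cofinally on the relevant finite piece of $p$, then some single value $t(x,\vec{m})=y_0$ with $y_0\in M$ holds cofinally), and in the absence of pigeonhole this must be built into $p$ term by term during the construction. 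Thus the end-extension claim, the consistency and completeness of $p(A)$, and almost minimality all hinge on the one uniform refinement lemma that the proposal assumes rather than proves.
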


\begin{theorem}\label{main}Suppose $T$ satisfies A1-A3.
For every $\delta<\omega_1$ there are countable models $\ma$ and $\mb$ of $T$ such that $\ma$ and $\mb$ are $\delta$-equivalent but not isomorphic.
\end{theorem}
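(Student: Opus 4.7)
The plan is to reduce Theorem~\ref{main} to the existence of non-isomorphic but $\delta$-equivalent countable linear orders. For every $\delta<\omega_1$ such $L\not\cong K$ exist (it suffices to pick them among countable ordinals), and by Proposition~\ref{baf} we may fix a back-and-forth sequence $(Q_\beta)_{\beta\leq\delta}$ of finite partial order-isomorphisms $L\to K$. I attach to each of $L$ and $K$ a countable model of $T$ by placing, above a fixed base $\mm_0$, a skeleton of realizations of the type $p$ from Theorem~\ref{shelah}; the sequence $(Q_\beta)$ will then lift to a back-and-forth witnessing $\ma\equiv^\delta_{\infty\omega}\mb$.

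Concretely, fix a countable prime model $\mm_0\models T$ (which exists since A1 gives definable Skolem functions). I build $\ma$ containing $\mm_0$ and a skeleton $\{a_\ell:\ell\in L\}$ such that $a_\ell$ realizes $p$ over $\SH(M_0\cup\{a_{\ell'}:\ell'<\ell\})$ for every $\ell\in L$, and $\ma=\SH(M_0\cup\{a_\ell:\ell\in L\})$; existence reduces by compactness to finite satisfiability of the corresponding diagram, which is supplied by Theorem~\ref{shelah}(1) applied to any finite set of skeleton constants in $L$-order. Theorem~\ref{shelah}(2) ensures each successive Skolem-hull extension is an almost minimal end-extension, so $\mm_0\preceq_{end}\ma$ and the linear order of gaps of $\ma$ above $\mm_0$ is order-isomorphic to $L$. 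Building $\mb$ analogously from $K$ and noting that this gap order is an isomorphism invariant (with the primality of $\mm_0$ pinning down its canonical embedding into $\ma$ and $\mb$) gives $\ma\not\cong\mb$.

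For the $\delta$-equivalence I take $P_\beta$ to be the set of finite partial maps of the form $f:t^\ma(\bar a_{\bar\ell},\bar c)\mapsto t^\mb(\bar b_{h(\bar\ell)},\bar c)$, where $t$ ranges over tuples of Skolem terms, $\bar c$ over tuples from $M_0$, and $h$ over $Q_\beta$. Clause~(1) of Theorem~\ref{shelah} guarantees that the type of $\bar a_{\bar\ell}$ over $M_0$ depends only on the order type of $\bar\ell$, so matched skeleton tuples in $\ma$ and $\mb$ satisfy the same formulas over $M_0$ and $f$ really is a partial isomorphism. The forth clause for $P_{\beta+1}$ is then natural: given $f\in P_{\beta+1}$ associated with $h\in Q_{\beta+1}$ and given $a=t^\ma(\bar a_{\bar\ell},\bar c)\in A$, use the forth clause of $(Q_\beta)$ to extend $h$ to some $h'\in Q_\beta$ with $\bar\ell\subseteq\dom(h')$ and set $b=t^\mb(\bar b_{h'(\bar\ell)},\bar c)$; the back step is symmetric. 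The main obstacle is precisely the verification that matched skeleton tuples carry the same type, which requires carefully unfolding the schema $\Psi_p$ and using A1 to detect every formula about a Skolem term over a skeleton tuple at the level of $\Psi_p$ evaluated in $\mm_0$, so that the information can be transferred to the matched skeleton in $\mb$.
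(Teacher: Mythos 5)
You follow the paper's strategy in outline---a skeleton of realizations of the type $p$ of Theorem~\ref{shelah} indexed by non-isomorphic but highly equivalent countable ordinals, a lift of the order back-and-forth system to the models, and almost minimality for non-isomorphism---and your identification of ``matched skeleton tuples realize the same type over the base'' as the main lemma is exactly the paper's CLAIM A/B. But two steps fail as written.

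First, the lift of the back-and-forth sequence is off by a factor of $\omega$. In your forth step you have $h\in Q_{\beta+1}$ and an element $a=t^\ma(\bar a_{\bar\ell},\bar c)$ whose skeleton support $\bar\ell$ is a finite tuple of length $n$ that you do not control. Proposition~\ref{baf} lets you extend $h$ by only \emph{one} point of $L$ per level, so absorbing all of $\bar\ell$ lands you in $Q_{\beta+1-n}$ (if $\beta+1\ge n$ at all), not in $Q_\beta$; hence your $(P_\beta)_{\beta\le\delta}$ does not satisfy clause (2) of Proposition~\ref{baf}. The paper's remedy is to take the two ordinals $\omega\cdot\delta$-equivalent rather than $\delta$-equivalent and to define the model system by $I_\xi=\{f_\pi:\pi\in J_{\omega\cdot\xi}\}$, so that between consecutive levels of the model sequence there are $\omega$ levels of the order sequence available to absorb an arbitrary finite tuple of skeleton points. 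Your construction needs the same rescaling.

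Second, the non-isomorphism step is asserted rather than proved, and the mechanism you invoke is not the right one. Primality of $\mm_0$ gives an elementary embedding into every model of $T$, but it does not force an isomorphism $\sigma:\ma\to\mb$ to carry the designated copy of $\mm_0$ in $\ma$ onto the designated copy in $\mb$: $\sigma(M_0)$ is merely some elementary submodel of $\mb$. What actually works, and is what the paper does, is this: $M_0$ is not cofinal in $\ma$ (the skeleton lies end-extension-wise above it), so $\sigma(M_0)$ is not cofinal in $\mb$ and is therefore contained in some level $B_\xi$; then induction on levels, using that each level is an almost minimal end extension of the previous one, gives $\sigma(A_\beta)\subseteq B_{\xi+\beta}$. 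For this to produce a contradiction the index ordinals must be chosen so that $\xi+L<K$ for every $\xi<K$. An arbitrary non-isomorphic pair does not guarantee this: if $\alpha$ is additively indecomposable and you take $L=\alpha$, $K=\alpha+\alpha$, then $\xi+\alpha$ can equal $\alpha+\alpha$, and the translation argument yields no contradiction. The paper's choice $L=\alpha$, $K=\alpha\cdot\omega$ is made precisely so that $\xi+\alpha<\alpha\cdot\omega$ for all $\xi<\alpha\cdot\omega$. So ``pick any non-isomorphic $\delta$-equivalent countable ordinals'' does not suffice; the particular pair, and the bounded-image-plus-induction argument, are essential parts of the proof.
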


\begin{proof}
By Theorem~\ref{shelah}, there is a type $p$ over $\emptyset$ and an associated defining schema $\Psi_p$ such that 
for all $\ma\models T$ the schema $\Psi_p$ defines a 
type $p(A)$ over $A$ such that it is the only type $q$ over $A$ such that $p\subseteq q$ and $x>a \in q$ for all $a\in  A$, and moreover, if $\ma\prec\mc$ and $a\in C$ realizes $p(A)$ in $\mc$,  then $\SH(A\cup\{a\})$ is an almost minimal end 
extension of $\ma$.

Let $\ma$ be any countable model of $T$.
We define for all $\alpha<\omega_1$ models $\ma_\alpha$ and elements $a_\alpha\in A_{\alpha+1}\setminus A_\alpha$ such that $\ma_0=\ma$, for limit $\gamma$, $\ma_\gamma=\bigcup_{\alpha<\gamma}\ma_\alpha$, and
$\ma_{\alpha+1}=\SH(A_\alpha\cup\{a_\alpha\})$, where $a_\alpha$ relaizes $p(\ma_\alpha)$ in $\ma_\alpha$. By \cite{MR0209132} we can find a limit ordinal $\alpha<\omega_1$ such that $(\alpha,<)$ and $(\alpha\cdot\omega,<)$ are $\omega\cdot\delta$-equivalent. We claim  that $\ma_\alpha$ and $\ma_{\alpha\cdot\omega}$ are $\delta$-equivalent. To prove this, it is helpful to first show
that:
\medskip

\noindent{\bf CLAIM A:} If $\beta_1<\ldots<\beta_{n}<\omega_1$ and $\gamma_1<\ldots<\gamma_n<\omega_1$, then the conditions $f\restriction A=id, f(a_{\beta_i})=
a_{\gamma_i}$ determine a unique isomorphism $$\SH(A\cup\{a_{\beta_1},\ldots, a_{\beta_n}\})\to \SH(A\cup \{a_{\gamma_1},\ldots, a_{\gamma_n}\}).$$ 

We prove this by induction on $n$. Suppose the claim holds for sequences shorter than $n$. Suppose $\beta_1<\ldots<\beta_{n}<\omega_1$ and $\gamma_1<\ldots<\gamma_n<\omega_1$. By induction hypothesis, the conditions $f\restriction A=id, f(a_{\beta_i})=
a_{\gamma_i}$, $1\le i<n$, determine a unique isomorphism 
\begin{equation}\label{isom}
f:\SH(A\cup\{a_{\beta_1},\ldots, a_{\beta_{n-1}}\})\to \SH(A\cup \{a_{\gamma_1},\ldots, a_{\gamma_{n-1}}\}).
\end{equation} 
Suppose now $a\in \SH(A\cup\{a_{\beta_1},\ldots, a_{\beta_{n}}\})\setminus \SH(A\cup\{a_{\beta_1},\ldots, a_{\beta_{n-1}}\})$, e.g. $a=t(\vec{c},a_{\beta_1},\ldots, a_{\beta_{n}})$, where $\vec{c}\in A$. We define
$$f(a)=t(\vec{c},a_{\gamma_1},\ldots, a_{\gamma_{n}}).$$ To see that this is legitimate we now prove:
\medskip

\noindent{\bf CLAIM B:}
 The following two conditions are equivalent for all literals (i.e. atomic or negated atomic formulas) $\phi(\vec{x},x_1,\ldots,x_n)$:

\begin{description}
\item [(I)] $\SH(A\cup\{a_{\beta_1},\ldots, a_{\beta_n}\})\models\phi(\vec{c},a_{\beta_1},\ldots, a_{\beta_n})$
\item [(II)] $\SH(A\cup\{a_{\gamma_1},\ldots, a_{\gamma_n}\})\models\phi(\vec{c},a_{\gamma_1},\ldots, a_{\gamma_n})$

\end{description}

\medskip

It suffices to prove that (I) implies (II).
Assume (I). Then $a_{\beta_n}$ satisfies $\phi(\vec{c},a_{\beta_1},\ldots, a_{\beta_{n-1}},x)$ in $\ma_{\beta_{n}}$. Hence $\phi(\vec{c},a_{\beta_1},\ldots, a_{\beta_{n-1}},x)\in p(\ma_{\beta_{n-1}})$. Now $\psi(\vec{c},a_{\beta_1},\ldots, a_{\beta_{n-1}})$ holds in $\ma_{\beta_{n-1}}$, hence in $\SH(A\cup\{a_{\beta_1},\ldots, a_{\beta_n}\})$. By (\ref{isom}), $\psi(\vec{c},a_{\gamma_1},\ldots, a_{\gamma_{n-1}})$ holds in  $\SH(A\cup\{a_{\gamma_1},\ldots, a_{\gamma_n}\})$, hence in $\ma_{\gamma_{n-1}}$. Hence  $\phi(\vec{c},a_{\gamma_1},\ldots, a_{\gamma_{n-1}},x)\in p(\ma_{\gamma_{n-1}})$. Therefore the element $a_{\gamma_n}$ satisfies  $\phi(\vec{c},a_{\gamma_1},\ldots, a_{\gamma_{n-1}},x)$ in $\ma_{\gamma_{n}}$, and hence in $\SH(A\cup\{a_{\gamma_1},\ldots, a_{\gamma_n}\})$. We have proved (II). This finishes the proof of CLAIM B and thereby the proof of CLAIM A.

The proof that $f$ is an isomorphism is a consequence of  the above once we observe that $f$ is one-one by the above, and onto by construction. The uniqueness of $f$ is obvious.

We now prove $\ma_\alpha\equiv_\delta\ma_{\alpha\cdot\omega}$. Since $(\alpha,<)$ and $(\alpha\cdot\omega,<)$ are $\omega\cdot\delta$-equivalent, Proposition~\ref{baf} gives  a back-and-forth sequence $(J_\xi)_{\xi\le \omega\cdot\delta}$ for $((\alpha,<),(\alpha\cdot\omega,<))$. We define a sequence 
$(I_\xi)_{\xi\le\delta}$ of partial mappings $A_\alpha\to A_{\alpha\cdot\omega}$ as follows:
If $\pi\in J_{\omega\cdot\xi}$ with $$\pi=\{(a_{\beta_1},a_{\gamma_1}),\ldots, (a_{\beta_n},a_{\gamma_n})\},$$ where $a_{\beta_1}<\ldots< a_{\beta_{n}}$ and $a_{\gamma_1}<\ldots< a_{\gamma_{n}}$, then $f_\pi\in I_\xi$ is the isomorphism obtained by CLAIM A. We show: 

\medskip

\noindent{\bf CLAIM C:} $(I_\xi)_{\xi\le\delta}$ is a back-and-forth sequence for $(\ma,\mb)$.
\medskip

Suppose $f_\pi\in I_{\xi+1}$, where $\pi\in J_{\omega\cdot(\xi+1)}$ and $a\in A_\alpha$. Since $\alpha$ is a limit ordinal, $a\in A_{\beta}$ for some $\beta<\alpha$. Hence $a\in\SH(A \cup\{a_{\beta'_1},\ldots, a_{\beta'_{n'}}\})$ for some $\beta'_1<\ldots<\beta'_{n'}<\alpha$. Since $\pi\in J_{\omega\cdot\xi+n'}$, there is $\pi'\in J_{\omega\cdot\xi}$ such that ${\beta'_1},\ldots, {\beta'_{n'}}\in\dom(\pi')$. Now $a_{\beta'_1},\ldots, a_{\beta'_{n'}}\in\dom(f_{\pi'})$. Hence $a\in\dom(f_{\pi'})$. The argument is the same if we start with $a\in A_{\alpha\cdot\omega}$. CLAIM C is proved.

We are left with showing that $\ma_\alpha\not\cong\ma_{\alpha\cdot\omega}$. Suppose $\pi:\ma_\alpha\cong\ma_{\alpha\cdot\omega}$. Choose $\xi<\alpha\cdot\omega$ such that 
$\pi(A)\subseteq A_\xi$. Such a $\xi$ exists because otherwise $\pi(A)$ is cofinal in $A_{\alpha\cdot\omega}$ which is not possible because $A$ is not cofinal in $\ma_\alpha$. We now argue that $\pi(A_\alpha)\subseteq A_{\xi+\alpha}$, where
$\xi+\alpha<\alpha\cdot\omega$, a contradiction with $ran(\pi)=A_{\alpha\cdot\omega}$. 
To this end we prove by induction on $\beta\le \alpha$ that $\pi(A_\beta)\subseteq A_{\xi+\beta}$. But this follows immediately from the fact that  $\ma_{\beta+1}$ is an almost minimal extension of $\ma_\beta$, whence $\pi(\ma_{\beta+1})$ is an almost minimal extension of $\pi(\ma_\beta)$.
\end{proof}

Note that the theorem cannot hold for $\delta=\omega_1$ because for countable models $\omega_1$-equivalence implies isomorphism by a simple cardinality argument.

\def\SH{\mathop{\rm Sh}}

The \emph{Scott Height} $\SH(\ma)$ of a countable model $\ma$ is the smallest $\alpha$ such that for any $a_1,\ldots,a_n,
b_1,\ldots,b_n\in A$ if $(\ma,a_1,\ldots,a_n)\equiv_\alpha(\ma,b_1,\ldots,b_n)$, then $(\ma,a_1,\ldots,a_n)\equiv_{\alpha+1}(\ma,b_1,\ldots,b_n)$. If $\ma$ and $\mb$ are countable and $\ma\equiv_{\SH(\ma)+\omega}\mb$, then $\ma\cong\mb$ \cite{MR0342370}. By \cite{MR0200133} every countable model in a countable vocabulary has a countable Scott Height.

\begin{corollary}
The Scott Heights of countable models of $\PA$ are unbounded in $\omega_1$. 
\end{corollary}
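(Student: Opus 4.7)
The plan is to read the corollary off directly from Theorem~\ref{main} together with the Scott Height fact recalled immediately above it. Since $\PA$ satisfies A1--A3 (least number principle from induction, existence of predecessors for non-zero elements, and no largest element), Theorem~\ref{main} applies to $T=\PA$.

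Given an arbitrary $\gamma<\omega_1$, I would set $\delta=\gamma+\omega$, which is again countable, and apply Theorem~\ref{main} with this $\delta$ and $T=\PA$ to obtain countable non-isomorphic models $\ma,\mb\models\PA$ that are $\delta$-equivalent. If we had $\SH(\ma)\le\gamma$, then by monotonicity of ordinal addition in the first argument we would have $\SH(\ma)+\omega\le\gamma+\omega=\delta$; since $\equiv_\delta$ descends to $\equiv_\xi$ for all $\xi\le\delta$, this would give $\ma\equiv_{\SH(\ma)+\omega}\mb$, and the Scott Height result from \cite{MR0342370} quoted before the corollary would then force $\ma\cong\mb$, contradicting the choice of $\ma$ and $\mb$. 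Hence $\SH(\ma)>\gamma$, and since $\gamma<\omega_1$ was arbitrary the Scott Heights of countable models of $\PA$ are unbounded in $\omega_1$.

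The real work is entirely inside Theorem~\ref{main}; at this stage the only point of care is leaving enough slack between $\delta$ and $\gamma$ so that the bound $\SH(\ma)+\omega$ cannot be absorbed into $\delta$. Choosing $\delta=\gamma+\omega$ is the natural minimal choice, though any $\delta\ge\gamma+\omega$ works just as well, and one could equally replace $\PA$ by $\ZFC$ in view of Theorem~\ref{zfc}, yielding the analogous unboundedness statement there.
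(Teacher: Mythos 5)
Your proposal is correct and is essentially identical to the paper's own proof: the paper also applies Theorem~\ref{main} with the equivalence ordinal $\alpha+\omega$ and concludes via the quoted Barwise fact that the Scott Heights must exceed $\alpha$. You have merely spelled out the contrapositive step ($\SH(\ma)\le\gamma$ would give $\ma\equiv_{\SH(\ma)+\omega}\mb$ and hence $\ma\cong\mb$) that the paper leaves implicit.
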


\begin{proof}
Suppose $\alpha<\omega_1$. By Theorem~\ref{main} there are countable non-isomorphic models $\ma$ and $\mb$ of $\PA$ such that $\ma\equiv_{\alpha+\omega}\mb$. It follows that the Scott Heights of $\ma$ and $\mb$ have to be greater than $\alpha$.
\end{proof}
Let us now turn to set theory:

\begin{theorem}\label{zfc}Suppose $\ZFC$ is consistent.
Then for every $\delta<\omega_1$ there are countable models $\ma$ and $\mb$ of $\ZFC$ such that $\ma$ and $\mb$ are $\delta$-equivalent but not isomorphic.
\end{theorem}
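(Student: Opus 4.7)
The plan is to imitate the proof of Theorem~\ref{main} essentially line-by-line, with the single nontrivial modification being a $\ZFC$-adapted substitute for Theorem~\ref{shelah}. The proof of Theorem~\ref{main} invokes A1--A3 only in order to extract, via Theorem~\ref{shelah}, a schematically defined type $p$ over $\emptyset$ whose realizations produce almost minimal elementary end-extensions $\SH(A\cup\{a\})$. Once such a $p$ is available in the $\ZFC$ setting, the entire argument---Claims A, B, C, the lifting of a back-and-forth sequence from $(\alpha,<),(\alpha\cdot\omega,<)$ to the pair of models, and the cofinality-based non-isomorphism step---copies over unchanged.

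To produce the $\ZFC$-analogue of Theorem~\ref{shelah}, I would first replace $\ZFC$ by a conservative Skolemization (or pass to $\ZFC+V=L$), so that definable Skolem functions are at hand; a non-isomorphic $\delta$-equivalent pair for the richer theory yields one for $\ZFC$ by taking reducts. I would then use the class $\On$ of ordinals as the scaffold on which Shelah's construction runs: reinterpret the symbol ``$<$'' of A1--A3 as $\in$ restricted to ordinals, so that A1 holds by Foundation, A3 holds because there is no largest ordinal, and successor ordinals furnish enough predecessors for Shelah's diagonalization to go through. The type $p$ is then a ``new top ordinal'' type refined by a Shelah-style minimality schema $\Psi_p$; a realization $a$ of $p(A)$ over $A$ then generates, via the Skolem hull $\SH(A\cup\{a\})$, an almost minimal rank-initial elementary extension of the structure generated by $A$, because under the definable global well-ordering every element of the extension is coded by ordinals and hence by Skolem terms in $a$ with parameters from $A$.

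With this analogue in hand, the rest of the proof proceeds in parallel with Theorem~\ref{main}. Fix a countable $\ma\models\ZFC$ and iteratively build a tower $(\ma_\beta)_{\beta<\omega_1}$ by adjoining realizations of $p(\ma_\beta)$ and closing under Skolem functions. Using \cite{MR0200133}, pick a countable limit $\alpha$ such that $(\alpha,<)$ and $(\alpha\cdot\omega,<)$ are $\omega\cdot\delta$-equivalent; the resulting Karp back-and-forth sequence on the ordinal indices then lifts to a back-and-forth sequence for $(\ma_\alpha,\ma_{\alpha\cdot\omega})$ via the uniqueness-of-isomorphism-between-Skolem-hulls arguments analogous to Claims A--C, giving $\delta$-equivalence. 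Non-isomorphism follows verbatim from the cofinality argument: any would-be isomorphism $\pi:\ma_\alpha\to\ma_{\alpha\cdot\omega}$ must send the countable $A$ into some $\ma_\xi$ with $\xi<\alpha\cdot\omega$, and rank-initial almost minimality forces $\pi(A_\beta)\subseteq A_{\xi+\beta}$ by induction on $\beta\le\alpha$, contradicting surjectivity.

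The principal obstacle is the first step. Unlike $\PA$, $\ZFC$ does not satisfy A1--A3 in any literal sense because $\in$ is not a linear order on the full universe. The workaround sketched above---using the class of ordinals as a definable well-ordered scaffold and exploiting the definable global well-ordering of $V=L$ to reduce arbitrary Skolem hulls to their ordinal content---is the real technical content that must be verified carefully; once it is in place, the rest of the proof transfers from Theorem~\ref{main} with essentially only notational changes.
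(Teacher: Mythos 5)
Your overall strategy---massage a model of $\ZFC$ until the machinery of Theorem~\ref{shelah} applies, then rerun Theorem~\ref{main}---is the right instinct, but the specific workaround you propose has a genuine gap, and it sits exactly at the point you defer as ``the real technical content that must be verified carefully.'' Running the construction on the ordinals (or on the $\preceq_L$-order of $V=L$) does not satisfy the hypotheses of Theorem~\ref{shelah}: axiom A2 requires every non-minimal element to have an immediate predecessor, and this fails at every limit ordinal, equivalently at every limit position of $\preceq_L$. So Theorem~\ref{shelah} cannot be invoked as a black box, and your assertion that ``successor ordinals furnish enough predecessors for Shelah's diagonalization to go through'' is not an argument; it would require re-proving the existence of the definable minimal type $p$ and the almost-minimality of $\SH(A\cup\{a\})$ in a non-discrete setting, which is a substantial piece of work (and one would also have to decide whether ``end extension'' means $\preceq_L$-end extension or rank-initial extension, since the cofinality-based non-isomorphism argument depends on which order is used). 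As written, the proposal reduces the theorem to an unproved analogue of Theorem~\ref{shelah}.

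The paper avoids this obstacle entirely by a different reduction. It Skolemizes $\mm\models\ZFC+V=L$, Morleyizes (adding predicates $R_\phi$ for all formulas in the Skolemized language), and then restricts to the submodel $\mm^*$ with universe $\omega^\mm$, carrying the traces of the Skolem functions. The theory $T^*=\Th(\mm^*)$ is an extension of $\PA$ and satisfies A1--A3 literally, so Theorem~\ref{main} applies verbatim to give non-isomorphic $\omega\cdot\delta$-equivalent countable models $\ma^*,\mb^*$ of $T^*$. The models of $\ZFC$ are then recovered by a term-model (``derived model'') construction whose elements are equivalence classes of Skolem terms with parameters from $A^*$, with $\in$ read off from the $R$-predicates. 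The remaining work---which your proposal has no counterpart for---is to verify that $\omega$ of the derived model is exactly the image of $A^*$, that back-and-forth sequences transfer from $(\ma^*,\mb^*)$ to the derived models (with the quantifier-rank dropping from $\omega\cdot\delta$ to $\delta$), and that an isomorphism of the derived $\{E\}$-reducts would induce an isomorphism $\ma^*\cong\mb^*$. If you want to salvage your more direct route, you would need to locate or prove a $\ZFC$-version of Theorem~\ref{shelah} for rank-initial minimal elementary extensions; the paper's coding trick is precisely a way of not having to do that.
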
 

\begin{proof}
Suppose $\ZFC$ has a model $\mm=(M,E^\mm)$. W.l.o.g. $M\models V=L$. Let $\preceq_L$ be the canonical definable well-order of $L$. Let $L_0$ be the vocabulary $\{E\}$ of $\mm$ added with names for the definable Skolem-functions $f_{\phi(y,x_1,\ldots,x_n)}$, where $\phi(y,x_1,\ldots,x_n)$ is an $\{E\}$-formula, such that if $f_{\phi(y,x_1,\ldots,x_n)}(a_1,\ldots,a_n)=b$, then  
\begin{equation}\label{Skolem}
\begin{array}{l}
\mm\models\exists y\phi(y,a_1,\ldots,a_n)\to(\phi(b,a_1,\ldots,a_n)\wedge\\
\hspace{2cm}\forall y(\phi(y,a_1,\ldots,a_n)\to b\preceq_Ly)).
\end{array}
\end{equation} We assume the Skolem-functions to be closed under composition. Let $L^*$ consist of a new $n$-ary predicate symbol $R_{\phi(x_1,\ldots,x_n)}$ for every $L_0$-formula $\phi(x_1,\ldots,x_n)$, the idea being that $R_{\phi(x_1,\ldots,x_n)}$ is a name for the relation defined by $\phi(x_1,\ldots,x_n)$. Let $L^+=L_0\cup L^*$. Let $\mm^+$ be the canonical expansion of $\mm$ to the vocabulary $L^+$, i.e.
\begin{equation}\label{Morley}
\begin{array}{l}
\mm^+\models \forall y_1,\ldots,y_n (R_{\phi(x_1,\ldots,x_n)}(y_1,\ldots,y_n)\leftrightarrow\phi(y_1,\ldots,y_n))
\end{array}
\end{equation}

We let $\mm^*$ be the submodel of $\mm^+\restriction L^*$ with universe $\omega^\mm$ equipped with the functions $h_g$ for the definable Skolem functions $g\in L_0$, where $h_g$ is such that for $x_1,\ldots,x_n\in M^*$
$$h_g(x_1,\ldots,x_n)=\left\{
\begin{array}{ll}
g(x_1,\ldots,x_n)&\mbox{ if $g(x_1,\ldots,x_n)\in M^*$}\\
0&\mbox{otherwise.}
\end{array}\right.$$ These are partial definable functions in $M$ and serve as Skolem-functions in $\mm^*$. Let $T^*=\Th(\mm^*)$. We think of $T^*$ as an extension of $\PA$ since any model of $\ZFC$ satisfies $\PA$ for its arithmetic part and the functions $+,\cdot$ are clearly definable in models of $T^*$. 

Suppose  $\ma^*\models T^*$. We define a new model  $\ma$ as follows. Let $N$ be the set of tuples $\langle f_\phi,a_1,\ldots,a_n\rangle$, where   $f_\phi$ is the name of a canonical definable Skolem function in $L_0$ in the variables $x_1,\dots,x_n$ and $a_1,\ldots,a_n\in A^*$. Define in $N$:
$$\langle f_\phi,a_1,\dots,a_n\rangle\sim \langle f_\psi,a'_1,\dots,a'_{n'}\rangle\iff$$ 
\begin{equation}\label{identity}
\ma^*\models R_{f_\phi(x_1,\dots,x_n)=f_\psi(x'_1,\dots,x'_{n'})}(a_1,\ldots,a_n,a'_1,\ldots,a'_{n'}).
\end{equation} It is easy to see that this is an equivalence relation on $N$. Let $A$ be the set of equivalence classes $[\langle f_\phi,a_1,\dots,a_n\rangle]$. Define on $A$:
$$[\langle f_\phi,,a_1,\dots,a_n\rangle] E^\circ  [\langle f_\psi,a'_1,\dots,a'_{n'}\rangle]\iff$$ 
\begin{equation}\label{epsilon}
\ma^*\models R_{f_\phi(x_1,\dots,x_n)\hspace{1pt} E\hspace{1pt}  f_\psi(x'_1,\dots,x'_{n'})}(a_1,\ldots,a_n,a'_1,\ldots,a'_{n'}).
\end{equation}
For any $n$-ary Skolem-function $f_\phi$ in $L_0$ we define
$$f_\phi^\circ([\langle f^1,a^1_1,\dots,a^1_{m_1}\rangle],\ldots,[\langle f^n,a^n_1,\dots,a^n_{m_n}\rangle])=$$
$$[\langle g,a^1_1,\dots,a^1_{m_1},\ldots,a^n_1,\dots,a^n_{m_1}\rangle]),$$ where $g$ is obtained from $f$ and $f^1,\ldots,f^n$ by composition.
The properties of the  $R$-predicates in $\mm$ imply that the conditions (\ref{identity}) and (\ref{epsilon}) are independent of the choice of the representative of the relevant $\sim$-classes.
Let $\ma=(A,E^\circ,(f^\circ)_{f\in L_0})$ and $\bar{\ma}=(A,E^\circ)$. We call $\ma$ the model \emph{derived} from $\ma^*$. 

There is a canonical mapping $j_\ma:A^*\to A$ defined by $j_\ma(a)=[\langle id,a\rangle]$, where $id$ is the Skolem function $f_{y=x_1}$.
\medskip

\noindent{\bf Claim D:} For any $L_0$-formula $\phi(x_1,\ldots,x_n)$ and any $a_1,\ldots,a_n\in A^*$ the following are equivalent:
\begin{description}
\item[(i)] $\ma^*\models R_{\phi(f_1(\vec{x}),\ldots,f_n(\vec{x}))}(\vec{a})$.
\item[(ii)] $\ma\models\phi([\langle f_1,\vec{a}\rangle],\ldots,[\langle f_n,\vec{a}\rangle])$.
\end{description}\medskip

Proof of the Claim: For atomic $\phi(x_1,\ldots,x_n)$  in the vocabulary $L_0$ the claim is true by definition. The properties of the predicates $R^{\mm^+}_\phi$, $R_\phi\in L^+$, arising from (\ref{Skolem}) and (\ref{Morley}), can be used to prove the claim by induction for composite $L_0$-formulas.
\medskip

In particular, $j_\ma$ is a bijection into some $A_0\subseteq A$. 

We can easily see that  $\bar{\ma}\models\Th(\mm)$, for suppose $\phi$ is an $\{E\}$-sentence such that $\mm\models\phi$. Then $\mm^+\models R_{\phi}()$, and hence $\mm^*\models R_\phi()$. It follows that $\ma^*\models R_\phi()$. By the above Claim D, $\bar{\ma}\models\phi$.

We now show that  $\omega^\ma=A_0$, as we need this information later: Suppose $\ma\models ``b=[\langle f_{\phi},\vec{a}\rangle]\in \omega"$, where $\phi=\phi(y,\vec{x})$ is an $L_0$-formula and $\vec{a}\in A^*$. 
Let $\theta(y)$ say in the vocabulary $\{E\}$ that $y$ is a natural number. Thus $\ma\models\theta(b)$. Now by Claim D,
\begin{equation}\label{123}
\ma^*\models 
R_{\theta(f_\phi(\vec{x}))}(\vec{a}).
\end{equation} %

Let $\phi'=\phi'(y,\vec{x})$ say  in the vocabulary $\{E\}$ that there are $y$ such that 
$\phi(y,\vec{x})$ and $y$ is the $\preceq_L$-least of them.   
 
Note that
$$\mm^*\models\forall\vec{z}((R_{\phi'(f_{\phi}(\vec{x}),\vec{x})}(\vec{z})\wedge 
R_{\theta(f_{\phi}(\vec{x}))}(\vec{z})))\to
 R_{\phi'}(h_{f_{\phi}}(\vec{z}),\vec{z})),$$ whence
$$\ma^*\models (R_{\phi'(f_{\phi}(\vec{x}),\vec{x})}(\vec{a})\wedge 
R_{\theta(f_{\phi}(\vec{x}))}(\vec{a}))\to
 R_{\phi'}(h_{f_{\phi}}(\vec{a}),\vec{a})).$$
 From this and (\ref{123}) we obtain
 $\ma^*\models 
 R_{\phi'}(h_{f_{\phi}}(\vec{a}),\vec{a})).$ Let $c=h_{f_{\phi}}(\vec{a})$. 
Note again that
$$\mm^*\models\forall\vec{z}\forall u\forall u'((R_{\phi'}(u,\vec{z})\wedge 
R_{\phi'}(u',\vec{z}))\to
u=u'),$$ whence
$j_\ma(c)=b$ and $b\in A_0$ follows. We have proved $\omega^\ma=A_0$.

The theory $T^*$ clearly satisfies (A1)-(A3).  
By Theorem~\ref{main} there are countable non-isomorphic $\ma^*,\mb^*$ such that both are models of $T^*$ and in addition $\ma^*\equiv_{\omega\cdot\delta}\mb^*$. They give rise to their derived models $\ma$ and $\mb$, as well as to bijections $j_\ma$ and $j_\mb$ onto   $A_0\subseteq A$ and $B_0\subseteq B$, respectively, as above. By the above, $\bar{\ma}$ and $\bar{\mb}$ are countable models of $\ZFC$.

Let us note that $\bar{\ma}\equiv_\delta\bar{\mb}$, for  there is a back-and-forth sequence $(P^*_\beta)_{\beta\le\omega\cdot\delta}$ for $\ma^*$ and $\mb^*$, and it is easy to derive from this a back-and-forth sequence $(P_\beta)_{\beta\le\delta}$ for $\bar{\ma}$ and $\bar{\mb}$. On the other hand,  we can easily conclude that $\bar{\ma}\not\cong\bar{\mb}$. Indeed, suppose $\sigma:\bar{\ma}\cong\bar{\mb}$. Because the Skolem functions $f_\phi$ are definable in $\bar{\ma}$ and $\bar{\mb}$, we have $\sigma:{\ma}\cong{\mb}$. If $a\in A^*$, then $j_\ma(a)\in A_0=\omega^\ma$, whence $\sigma(j_\ma(a))\in \omega^\mb$. There is $b\in B^*$ such that $\sigma(j_\ma(a))=j_\mb(b)$. We let $\sigma^*(a)=b$. It is clear that this is a well-defined bijection $A^*\to B^*$. Now clearly,  denoting the interpretation of $f_\phi$ in $\ma$ by $f^A_\phi$ and in $\mb$ by $f^B_\phi$,
$$\begin{array}{lcl}
\sigma([\langle f_\phi,a_1,\ldots,a_n\rangle]&=&
\sigma(f^A_\phi([\langle id,a_1\rangle],\ldots,[\langle id,a_n\rangle]))\\
&=&f^B_\phi(\sigma([\langle id,a_1\rangle]),\ldots,\sigma([\langle id,a_n\rangle])))\\
&=&[\langle f_\phi,\sigma^*(a_1),\ldots,\sigma^*(a_n)\rangle]\\
\end{array}$$
for all Skolem-functions $f_\phi$ in $L_0$ and all $a_1,\ldots,a_n\in A^*$.
 Moreover,
{\setlength{\arraycolsep}{2pt}$$
\begin{array}{lcl}
\ma^*&\models& R_{\phi(f_1(\vec{x}),\ldots,f_n(\vec{x}))}(\vec{a})\iff\\
\ma&\models&\phi([\langle f_1,\vec{a}\rangle],\ldots,[\langle f_n,\vec{a}\rangle])\iff\\
\mb&\models&\phi(\sigma([\langle f_1,\vec{a}\rangle]),\ldots,\sigma([\langle f_n,\vec{a}\rangle]))\iff\\
\mb&\models&\phi([\langle f_1,\sigma^*(\vec{a})\rangle],\ldots,[\langle f_n,\sigma^*(\vec{a})\rangle])\iff\\
\mb^*&\models& R_{\phi(f_1(\vec{x}),\ldots,f_n(\vec{x}))}(\sigma^*(\vec{a})).
\end{array}$$}
Similarly $\sigma^*$ preserves the functions $h_g$. In the end, $\sigma^*:\ma^*\cong\mb^*$, contrary to our assumption.

\end{proof}

\begin{corollary}
The Scott Heights of countable models of $\ZFC$ are unbounded in $\omega_1$. 
\end{corollary}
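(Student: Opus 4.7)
The plan is to mirror, verbatim in structure, the corollary just proved for $\PA$, now invoking Theorem~\ref{zfc} in place of Theorem~\ref{main}. Fix an arbitrary $\alpha<\omega_1$. The goal is to exhibit a countable model of $\ZFC$ whose Scott Height exceeds $\alpha$; since $\alpha$ was arbitrary, unboundedness in $\omega_1$ follows immediately.

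To produce such a model, I would apply Theorem~\ref{zfc} with $\delta=\alpha+\omega$ to obtain countable non-isomorphic models $\ma$ and $\mb$ of $\ZFC$ satisfying $\ma\equiv_{\alpha+\omega}\mb$. Now I invoke the fact cited just before the $\PA$ corollary (from \cite{MR0342370}): for countable $\ma$ and $\mb$, the relation $\ma\equiv_{\SH(\ma)+\omega}\mb$ forces $\ma\cong\mb$. If $\SH(\ma)\le\alpha$, then in particular $\alpha+\omega\ge \SH(\ma)+\omega$, and from $\ma\equiv_{\alpha+\omega}\mb$ we obtain $\ma\equiv_{\SH(\ma)+\omega}\mb$, hence $\ma\cong\mb$, contradicting non-isomorphism. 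Therefore $\SH(\ma)>\alpha$ (and similarly $\SH(\mb)>\alpha$), which is what we needed.

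There is essentially no obstacle here: the only nontrivial input is Theorem~\ref{zfc}, whose proof is the content of the preceding pages, together with the Scott-Height bound from \cite{MR0342370}. The proof therefore reduces to a two-line application, entirely parallel to the $\PA$ corollary.
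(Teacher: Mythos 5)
Your proof is correct and is exactly the argument the paper intends: the ZFC corollary is stated without proof precisely because it follows from Theorem~\ref{zfc} verbatim as the $\PA$ corollary follows from Theorem~\ref{main}, using the cited fact that $\ma\equiv_{\SH(\ma)+\omega}\mb$ implies $\ma\cong\mb$ for countable models. Nothing to add.
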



\end{document}